\newtheorem{theorem}{Theorem}
\newtheorem{lemma}{Lemma}
\theoremstyle{definition}
\begin{document}
\title[The absence of remainders in the Wiener-Ikehara theorem]{Note on the absence of remainders in the Wiener-Ikehara theorem}

\author[G. Debruyne]{Gregory Debruyne}
\thanks{G. Debruyne gratefully acknowledges support by Ghent University, through a BOF Ph.D. grant.}
\address{Department of Mathematics\\ Ghent University\\ Krijgslaan 281\\ B 9000 Gent\\ Belgium}
\email{gregory.debruyne@UGent.be}
\author[J. Vindas]{Jasson Vindas}
\thanks{The work of J. Vindas was supported by Ghent University through
the BOF-grant 01J11615 and by the Research Foundation--Flanders through the FWO-grant 1520515N}
\address{Department of Mathematics\\ Ghent University\\ Krijgslaan 281\\ B 9000 Gent\\ Belgium}
\email{jasson.vindas@UGent.be}
\subjclass[2010]{11M45, 40E05, 44A10.}
\keywords{Complex Tauberians; Wiener-Ikehara theorem; analytic continuation; Laplace transform; Mellin transform; remainders}

\begin{abstract} 
We show that it is impossible to get a better remainder than the classical one in the Wiener-Ikehara theorem even if one assumes analytic continuation of the Mellin transform after subtraction of the pole to a half-plane. We also prove a similar result for the Ingham-Karamata theorem.
\end{abstract}

\maketitle

\section{Introduction}

The Wiener-Ikehara theorem is a landmark in 20th century analysis. It states

\begin{theorem} Let $S$ be a non-decreasing function and suppose that
\begin{equation} \label{eqmellin abserrorW-I}
 G(s) := \int^{\infty}_{1} S(x) x^{-s-1} \mathrm{d}x \text{ converges for } \Re e \: s > 1
\end{equation}
and that there exists $a$ such that $G(s) - a/(s-1)$ admits a continuous extension to $\Re e \: s \geq 1$, then 
\begin{equation}
\label{eq conclusion abserrorW-I}
S(x) = ax + o(x).
\end{equation}
\end{theorem}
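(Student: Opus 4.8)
\emph{Reduction.} The plan is to pass to the additive variable $x=e^{t}$ and reduce \eqref{eq conclusion abserrorW-I} to the claim that $\rho(t):=e^{-t}S(e^{t})-a\to 0$ as $t\to\infty$. After this substitution $G(s)=\int_{0}^{\infty}S(e^{t})e^{-st}\,\mathrm{d}t$ for $\Re e\, s>1$, and the hypothesis says precisely that
\[ F(z):=G(1+z)-\frac{a}{z}=\int_{0}^{\infty}\rho(t)e^{-zt}\,\mathrm{d}t \]
(the last integral converging for $\Re e\, z>0$, since the convergence assumed in \eqref{eqmellin abserrorW-I} together with the monotonicity of $S$ forces $\rho(t)=O(e^{\delta t})$ for every $\delta>0$) admits a continuous extension to $\Re e\, z\ge 0$; write $F(i\tau)$ for its boundary values. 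Since $S$ is non-decreasing we may assume $S\ge 0$, so that $\rho(t)+a=e^{-t}S(e^{t})\ge 0$, and the only Tauberian input will be the comparison, immediate from the monotonicity of $S$, that $e^{-(t-t')}\bigl(\rho(t')+a\bigr)\le\rho(t)+a$ whenever $t\ge t'$ --- equivalently $\rho(t)+a\le e^{t'-t}\bigl(\rho(t')+a\bigr)$ whenever $t\le t'$.

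\emph{Smoothing.} Next I would convolve with the Fej\'er kernel
\[ K_{\lambda}(u)=\frac{1}{2\pi}\int_{-\lambda}^{\lambda}\Bigl(1-\frac{|\tau|}{\lambda}\Bigr)e^{iu\tau}\,\mathrm{d}\tau=\frac{\lambda}{2\pi}\Bigl(\frac{\sin(\lambda u/2)}{\lambda u/2}\Bigr)^{2}\ge 0, \]
which satisfies $\int_{\R}K_{\lambda}=1$ and $\int_{|u|\le\delta}K_{\lambda}\to 1$ as $\lambda\to\infty$ for each fixed $\delta>0$. For $\varepsilon>0$, Fubini's theorem (legitimate because $\rho(t)e^{-\varepsilon t}\in L^{1}(0,\infty)$) gives
\[ \int_{0}^{\infty}\rho(t)e^{-\varepsilon t}K_{\lambda}(y-t)\,\mathrm{d}t=\frac{1}{2\pi}\int_{-\lambda}^{\lambda}F(\varepsilon+i\tau)\Bigl(1-\frac{|\tau|}{\lambda}\Bigr)e^{iy\tau}\,\mathrm{d}\tau . \]
Now let $\varepsilon\to 0^{+}$. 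On the right, the integral tends to $\frac{1}{2\pi}\int_{-\lambda}^{\lambda}F(i\tau)\bigl(1-|\tau|/\lambda\bigr)e^{iy\tau}\,\mathrm{d}\tau$, since $F$ is continuous up to the boundary and hence $F(\varepsilon+i\tau)\to F(i\tau)$ uniformly on $[-\lambda,\lambda]$. On the left, I would split $\rho=(\rho+a)-a$ and pass to the limit in the nonnegative part $\int_{0}^{\infty}\bigl(\rho(t)+a\bigr)e^{-\varepsilon t}K_{\lambda}(y-t)\,\mathrm{d}t$ by monotone convergence --- this is exactly the step where positivity of $K_{\lambda}$ is indispensable --- and in the $-a$ part by dominated convergence. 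The outcome is $\int_{0}^{\infty}\rho(t)K_{\lambda}(y-t)\,\mathrm{d}t=\frac{1}{2\pi}\int_{-\lambda}^{\lambda}F(i\tau)\bigl(1-|\tau|/\lambda\bigr)e^{iy\tau}\,\mathrm{d}\tau$, and letting $y\to\infty$ the Riemann--Lebesgue lemma annihilates the right-hand side. Extending $\rho$ by the constant $-a$ on $(-\infty,0)$, this says
\[ \lim_{y\to\infty}\int_{\R}\bigl(\rho(y-u)+a\bigr)K_{\lambda}(u)\,\mathrm{d}u=a\qquad\text{for every }\lambda>0 . \]

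\emph{De-smoothing.} This final step --- recovering the pointwise statement from the smoothed one --- is the part I expect to be delicate, chiefly because $S(x)/x$ is not assumed bounded. First, the limit just displayed together with the non-negativity of its integrand forces $\rho$ to be bounded above: if $\rho(y_{n})\to+\infty$ along some $y_{n}\to\infty$, the comparison gives $\rho(y_{n}-u)+a\ge e^{-1}\bigl(\rho(y_{n})+a\bigr)$ for $u\in[-1,0]$, so the integral would diverge. Hence $\rho^{*}:=\limsup_{t\to\infty}\rho(t)<\infty$ and $\rho$ is bounded. To see $\rho^{*}\le 0$, choose $y_{n}\to\infty$ with $\rho(y_{n})\to\rho^{*}$, fix $h>0$, and note $\rho(t)+a\ge e^{-h}\bigl(\rho(y_{n})+a\bigr)$ for $t\in[y_{n},y_{n}+h]$; evaluating the displayed limit at $y=y_{n}+h/2$ and discarding the nonnegative tail yields $a\ge e^{-h}\bigl(\rho^{*}+a\bigr)\int_{|u|\le h/2}K_{\lambda}(u)\,\mathrm{d}u$, whereupon letting $\lambda\to\infty$ (so the integral tends to $1$) and then $h\to 0$ forces $\rho^{*}\le 0$. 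Symmetrically, choosing $y_{n}$ along which $\rho\to\rho_{*}:=\liminf_{t\to\infty}\rho(t)$, using $\rho(t)+a\le e^{h}\bigl(\rho(y_{n})+a\bigr)$ on $[y_{n}-h,y_{n}]$, evaluating the displayed limit at $y=y_{n}-h/2$, and bounding the remaining tail by $\bigl(\sup_{t}(\rho(t)+a)\bigr)\int_{|u|>h/2}K_{\lambda}(u)\,\mathrm{d}u\to 0$, one obtains $a\le e^{h}\bigl(\rho_{*}+a\bigr)$ after $\lambda\to\infty$, hence $\rho_{*}\ge 0$ after $h\to 0$. Therefore $\rho(t)\to 0$, i.e. $e^{-t}S(e^{t})\to a$, which in the variable $x=e^{t}$ is exactly \eqref{eq conclusion abserrorW-I}. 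The Fourier-analytic half of the argument is routine once one resolves to smooth with a \emph{positive} kernel; the real work is this Tauberian de-smoothing, where one must first bootstrap boundedness of $\rho$ and then recover the \emph{sharp} constant $a$ by playing the monotonicity of $S$ against the concentration of $K_{\lambda}$ as $\lambda\to\infty$.
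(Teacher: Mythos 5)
Your proof is correct: the reduction to $\rho(t)=e^{-t}S(e^t)-a$, the Fej\'er-kernel smoothing with the monotone/dominated convergence passage to the boundary, the Riemann--Lebesgue step, and the two-sided de-smoothing that plays the monotonicity inequality $\rho(t)+a\le e^{t'-t}(\rho(t')+a)$ ($t\le t'$) against the concentration of $K_\lambda$ all go through (the only point worth a sentence is the harmless normalization $S\ge 0$, achieved by subtracting $S(1)$, which changes $G$ only by $S(1)/s$). Note that the paper itself offers no proof of this theorem --- it is stated as classical background with a reference to Korevaar's book --- and your argument is essentially the standard Bochner--Korevaar Fej\'er-kernel proof given there, so there is no divergence of method to report.
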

This result is well-known in number theory as it leads to one of the quickest proofs of the prime number theorem. However, it has also important applications in other fields such as operator theory (see e.g. \cite{aramaki}). Over the last century the Wiener-Ikehara theorem has been extensively studied and generalized in many ways (e.g., \cite{ d-vW-I2016, d-vCT, delange1954, grahamvaaler, korevaar2006, revesz-roton, Tenenbaumbook,zhang2014}). We refer the interested reader to \cite[Chap. III]{korevaarbook} for more information about the Wiener-Ikehara theorem.\par

If one wishes to attain a stronger remainder in \eqref{eq conclusion abserrorW-I} (compared to $o(x)$), it is natural to strengthen the assumptions on the Mellin transform \eqref{eqmellin abserrorW-I}. We investigate here whether one can obtain remainders if the Mellin transform after subtraction of the pole at $s=1$ admits an analytic extension to a half-plane $\Re e \:s > \alpha$ where $0 < \alpha < 1$. It is well-known that one can get reasonable error terms in the asymptotic formula for $S$ if bounds are known on the analytic function $G$. The question of obtaining remainders if one does not have such bounds was recently raised by M\"uger \cite{muger}, who actually  conjectured the error term $O(x^{(\alpha + 2)/3 + \varepsilon})$ could be obtained for each $\varepsilon>0$. 

We show in this article that this is false. In fact, we shall prove 
in Section \ref{Section 3 abserrorW-I} 
the more general result that no reasonably good remainder can be expected in the Wiener-Ikehara theorem, with solely the classical Tauberian condition (of $S$ being non-decreasing) and the analyticity of $G(s) - A/(s-1)$ on  $\Re e \:s > \alpha$ for $0<\alpha<1$.
To show this result we will adapt an attractive functional analysis argument given by Ganelius\footnote{According to him \cite[p. 3]{ganelius}, the use of functional analysis arguments to avoid cumbersome constructions of counterexamples in Tauberian theory was suggested by L. H\"{o}rmander.} \cite[Thm. 3.2.2]{ganelius}. Interestingly, the nature of our problem requires to consider a suitable Fr\'{e}chet space of functions instead of working with a Banach space.

In Section \ref{ingham section abserrorterm} we shall apply our result on the Wiener-Ikehara theorem to study another cornerstone in complex Tauberian theory, namely, the Ingham-Karamata theorem for Laplace transforms \cite[Chap. III]{korevaarbook} (see \cite{d-vOptIngham,d-vCT} for sharp versions of it). Notably, a very particular case of this theorem captured special attention when Newman found an elementary contour integration proof that leads to a simple deduction of the prime number theorem; in fact, this proof is nowadays a chapter in various popular expository textbooks in analysis \cite{C-Qbook,L-Zbook}. We will show  that, just as for the Wiener-Ikehara theorem, no reasonable error term can be obtained  in the Ingham-Karamata theorem under just an analytic continuation hypothesis on the Laplace transform. On the other hand, the situation is then pretty much the same as for the Wiener-Ikehara theorem, error terms can be achieved if the Laplace transform satisfies suitable growth assumptions. We point out that the problem of determining such growth conditions on the Laplace transform has been extensively studied in recent times \cite{B-B-T2016,Chill-Seifert2016,Seifert2015} and such results have numerous applications in operator theory and in the study of the asymptotic behavior  of solutions to various evolution equations.

\section{Some lemmas}
 We start with some preparatory lemmas that play a role in our constructions. The first one is a variant of the so-called smooth variation theorem from the theory of regularly varying functions \cite[Thm. 1.8.2, p. 45]{binghambook}. 

\begin{lemma} \label{lemregularization abserrorW-I} Let  $\ell$ be a positive non-increasing function on $[0,\infty)$ such that $\ell(x)=o(1)$ (as $x\to\infty$). Then, there is a positive smooth function $L$ such that 
$$
\ell(x)\ll L(x) = o(1),$$ 
and, for some positive $C$,$A$ and $B$,
\begin{equation} \label{eqreg abserrorW-I}
 \left|L^{(n)}(x)\right| \leq CA^{n}n!x^{-n},  \qquad \text{for all } x \geq B \mbox{ and }  n\in\mathbb{N}. 
\end{equation}
\end{lemma}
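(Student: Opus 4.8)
The plan is to pass to the multiplicative (logarithmic) variable, where \eqref{eqreg abserrorW-I} amounts to the statement that a certain function extends to a \emph{bounded holomorphic} function on a sector around the positive real axis; I will produce such a function by an analytic mollification of $\ell$. Concretely, put $\phi(u):=\ell(e^{u})$ for $u\in\R$; then $\phi$ is positive and non-increasing, $\phi\le M_{0}:=\ell(0)$, and $\phi(u)\to 0$ as $u\to+\infty$. It suffices to exhibit $\psi$ that is holomorphic and bounded on a horizontal strip $\Omega_{\delta}:=\{w\in\mathbb{C}:|\operatorname{Im}w|<\delta\}$, is positive on $\R$, satisfies $\phi\ll\psi$ on $\R$, and has $\psi(u)\to 0$ as $u\to+\infty$. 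Indeed, $L(x):=\psi(\log x)$ ($x>0$) is then smooth and positive, $\ell(x)=\phi(\log x)\ll\psi(\log x)=L(x)$, and $L(x)=\psi(\log x)=o(1)$. Moreover $\log$ maps the sector $\Sigma_{\delta}:=\{z\neq 0:|\arg z|<\delta\}$ into $\Omega_{\delta}$, so $L$ extends holomorphically to $\Sigma_{\delta}$ with $\|L\|_{L^{\infty}(\Sigma_{\delta})}\le\|\psi\|_{L^{\infty}(\Omega_{\delta})}=:C_{\delta}$; since for every $x>0$ the disc $\{z:|z-x|<x\sin\delta\}$ is contained in $\Sigma_{\delta}$, Cauchy's estimates give $|L^{(n)}(x)|\le C_{\delta}\,n!\,(x\sin\delta)^{-n}$ for all $x>0$ and $n\in\mathbb{N}$, which is \eqref{eqreg abserrorW-I} with $C=C_{\delta}$, $A=1/\sin\delta$, and any $B>0$.

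To build $\psi$, I would mollify $\phi$ with the kernel $K(s):=1/(2\cosh s)$. It is positive and even on $\R$ with $\int_{\R}K(s)\,\mathrm{d}s=\pi/2$, it extends holomorphically to $\{|\operatorname{Im}w|<\pi/2\}$, and there $|K(\sigma+i\tau)|=\tfrac12(\sinh^{2}\sigma+\cos^{2}\tau)^{-1/2}$, which decays exponentially as $|\sigma|\to\infty$ and is bounded on each sub-strip $|\tau|\le\pi/2-\varepsilon$. Define $\psi(w):=\int_{\R}\phi(t)K(w-t)\,\mathrm{d}t$. The last two properties of $K$ make this integral converge absolutely, uniformly for $w$ in each sub-strip $|\operatorname{Im}w|\le\pi/2-\varepsilon$; by Morera's theorem and Fubini it defines a holomorphic function on $\{|\operatorname{Im}w|<\pi/2\}$ that is bounded on every such sub-strip, so we may fix, say, $\delta=\pi/4$. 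For real $u$ one has $\psi(u)=\int_{0}^{\infty}\bigl(\phi(u-s)+\phi(u+s)\bigr)K(s)\,\mathrm{d}s$, and since $\phi$ is non-increasing this is $\ge\phi(u)\int_{0}^{\infty}K(s)\,\mathrm{d}s=\tfrac{\pi}{4}\phi(u)$; hence $\psi>0$ on $\R$ and $\phi\ll\psi$ (here the monotonicity hypothesis is used). Finally, writing $\psi(u)=\int_{\R}\phi(u-s)K(s)\,\mathrm{d}s$ and splitting at $|s|=T$ bounds it by $\tfrac{\pi}{2}\phi(u-T)+M_{0}\int_{|s|>T}K$; letting $u\to+\infty$ and then $T\to+\infty$ shows $\psi(u)\to 0$, which completes the verification.

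The argument is otherwise routine; the one point requiring a little care is the boundedness of $\psi$ near the edges of the strip. The $L^{1}$-norm of $K$ along the line $\operatorname{Im}w=\tau$ blows up as $\tau\to\pm\pi/2$, so the naive bound on $\psi$ cannot survive there, which forces one to work inside a proper sub-strip; equivalently, $L$ extends boundedly only to a sector $|\arg z|<\delta<\pi/2$, and this is exactly why \eqref{eqreg abserrorW-I} comes out with some constant $A>1$ rather than $A=1$. Should the optimal value $A=1$ be desired, one can instead take $\psi$ of the form $\sum_{m}c_{m}\bigl(1+e^{u-s_{m}}\bigr)^{-1}$, a convergent series of shifted logistic functions, each uniformly bounded by $1$ on $\{|\operatorname{Im}w|<\pi/2\}$ (a direct computation of $|1+e^{w}|$ on that strip), which extends $L$ boundedly to the whole half-plane $\operatorname{Re}z>0$.
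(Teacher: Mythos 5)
Your proof is correct, and while it follows the same overall strategy as the paper (mollify $\ell$ by convolution with a well-chosen analytic kernel, use monotonicity of $\ell$ for the lower bound and a dominated-convergence/splitting argument for $L=o(1)$), the mechanism by which you obtain \eqref{eqreg abserrorW-I} is genuinely different. The paper sets $L(y)=\int_0^\infty \ell(x)P(x,y)\,\mathrm{d}x$ with the Poisson kernel $P(x,y)=y/(\pi(y^2+x^2))$, gets $L(y)\geq \ell(y)/4$ and $L=o(1)$ just as you do, and then proves \eqref{eqreg abserrorW-I} by estimating $\partial_y^n P$ directly, which yields the explicit constants $C=\ell(0)$, $A=2$. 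You instead work in the logarithmic variable with the kernel $1/(2\cosh)$, note that the mollification $\psi$ is holomorphic and bounded on a substrip, hence $L(x)=\psi(\log x)$ extends to a bounded holomorphic function on a sector $|\arg z|<\delta$, and read off \eqref{eqreg abserrorW-I} from Cauchy's estimates with $A=1/\sin\delta$. This packaging is attractive because it makes explicit that \eqref{eqreg abserrorW-I} is essentially the statement that $L$ has a bounded analytic continuation to a sector, which is exactly the kind of converse information the paper extracts afterwards in Lemma \ref{lemanalyticextension}; the paper's computation is more elementary and gives concrete constants, while yours is conceptually cleaner. Two minor points, neither a gap: your $L$ is smooth and positive only on $(0,\infty)$, but the paper's construction has the same feature (its $L$ vanishes at $0$) and only large $x$ plus local integrability matter in the sequel; and your closing aside about forcing $A=1$ via a series of shifted logistic functions is only a sketch (the coefficients and shifts are not constructed), but it is not needed for the lemma.
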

\begin{proof} We consider the Poisson kernel of the real line
$$
P(x,y) =\frac{y} {\pi(y^2+x^{2})}=\frac{i}{2\pi}\left(\frac{1}{x+iy}-\frac{1}{x-iy}\right).
$$
Differentiating the last expression with respect to $y$, it is clear that we find
\begin{align*}
 \left|\frac{\partial^{n}P}{\partial y}(x,y)\right|& \leq \frac{2^{n+1}n! y^{n+1}}{\pi (y^2+x^{2})^{1+n}} \max_{0\leq j\leq n+1 } |x/y|^{j}
 \\
 &
<  \frac{2^{n+1}n!}{\pi (y^2+x^{2})^{(1+n)/2}}, \quad \text{for all }n\geq1.
\end{align*}
We set 
$$L(y) = \int^{\infty}_{0} \ell(x y) P(x,1) \mathrm{d}x= \int^{\infty}_{0} \ell(x) P(x,y) \mathrm{d}x.
$$ By the dominated convergence theorem, we have $L(y)= o(1)$. Since $\ell$ is non-increasing and $P(x,1)$ positive, it follows that 
$$L(y)\geq \int^{1}_{0} \ell(y) P(x,1) \mathrm{d}x= \frac{\ell(y)}{4}\: .$$
For the derivatives we have
$
\left|L^{(n)}(y)\right| \leq \ell(0) 2^{n} n! y^{-n}$ for all  $n\in\mathbb{N}$ and  $y>0$.

\end{proof}

We also need to study the analytic continuation of the Laplace transform of functions satisfying the regularity assumption (\ref{eqreg abserrorW-I}).

\begin{lemma} \label{lemanalyticextension} Suppose that $L\in L^{1}_{loc}[0,\infty)$ satisfies the regularity assumption \eqref{eqreg abserrorW-I}
for some $A,B,C>0$ and set $\theta=\arccos(1/(1+A))$. Then its Laplace transform $\mathcal{L}\{L;s\} = \int^{\infty}_{0}e^{-sx}L(x)\mathrm{d}x$ converges for $\Re e \: s > 0$ and admits analytic continuation to the sector $-\pi +\theta<\arg s< \pi-\theta$.
\end{lemma}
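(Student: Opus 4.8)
The plan is to recognise the regularity assumption \eqref{eqreg abserrorW-I} as a Cauchy estimate, use it to extend $L$ holomorphically to a sector-like region, and then deform the contour of the Laplace integral into that region. Convergence on $\Re e\, s>0$ is immediate: taking $n=0$ in \eqref{eqreg abserrorW-I} gives $|L(x)|\le C$ for $x\ge B$, and since $L\in L^{1}[0,B]$ as well, the integral defining $\mathcal{L}\{L;s\}$ converges absolutely and locally uniformly, hence is holomorphic, on $\Re e\, s>0$.

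For the continuation, fix $x\ge B$; by \eqref{eqreg abserrorW-I} the Taylor coefficients of $L$ at $x$ obey $|L^{(n)}(x)|/n!\le CA^{n}x^{-n}$, so the Taylor series at $x$ has radius of convergence at least $x/A$ and $L$ extends to a holomorphic function on the ``trumpet'' $\Omega:=\bigcup_{x>B}D(x,x/(A+1))$, where $D(c,\rho)$ is the open disk of radius $\rho$ about $c$. Shrinking the disks from radius $x/A$ to $x/(A+1)$ is deliberate: on $\Omega$ the geometric series then yields the uniform bound
\[
|L(z)|\le\sum_{n\ge 0}\frac{|L^{(n)}(x)|}{n!}\,|z-x|^{n}\le C\sum_{n\ge 0}\Big(\frac{A}{A+1}\Big)^{n}=C(A+1),
\]
and, completing the square in $|\xi e^{i\psi}-x|^{2}<x^{2}/(A+1)^{2}$ and solving for $x>B$, one checks that $\Omega$ contains the truncated sector $\{\,z:\ |\arg z|<\varphi,\ |z|>B_{0}(\varphi)\,\}$ for every $\varphi<\arcsin\big(1/(A+1)\big)=\pi/2-\theta$, with $B_{0}(\varphi)$ finite. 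This computation is where the value $\theta=\arccos\big(1/(1+A)\big)$ enters.

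The continuation now follows by a contour rotation. Write $\mathcal{L}\{L;s\}=\int_{0}^{B}e^{-sx}L(x)\,\mathrm{d}x+\int_{B}^{\infty}e^{-sx}L(x)\,\mathrm{d}x$, the first summand being entire. For fixed $\varphi\in(0,\pi/2-\theta)$, take $B_{0}=B_{0}(\varphi)$ as above and deform the ray $[B,\infty)$ to $\Gamma_{\varphi}:=[B,B_{0}]\cup\{B_{0}+ue^{-i\varphi}:u\ge 0\}$, which lies in $\Omega$. Cauchy's theorem, together with the vanishing of the connecting circular arcs at infinity---assured by $|L|\le C(A+1)$ on $\Omega$ and by $\Re e\,(sz)\to+\infty$ along $\Gamma_{\varphi}$---gives
\[
\int_{B}^{\infty}e^{-sx}L(x)\,\mathrm{d}x=\int_{\Gamma_{\varphi}}e^{-sz}L(z)\,\mathrm{d}z
\]
for all $s$ with $\Re e\, s>0$ and $\Re e\,(se^{-i\varphi})>0$. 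The right-hand side converges locally uniformly, hence is holomorphic, on the whole half-plane $\Re e\,(se^{-i\varphi})>0$, i.e.\ on $\varphi-\pi/2<\arg s<\varphi+\pi/2$, and agrees on the overlap with the original expression; hence $\mathcal{L}\{L;s\}$ continues holomorphically up to $\arg s<\varphi+\pi/2$. Letting $\varphi\uparrow\pi/2-\theta$, and repeating the argument with $\Gamma_{-\varphi}:=[B,B_{0}]\cup\{B_{0}+ue^{i\varphi}:u\ge 0\}$ to handle the lower part of the sector, one obtains a holomorphic function on $-\pi+\theta<\arg s<\pi-\theta$; any two of these rotated representations agree on the intersection of their domains of holomorphy (Cauchy's theorem applied between the two contours), so they patch into a single analytic continuation.

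The main obstacle is the contour manipulation: one must verify that the entire homotopy from $[B,\infty)$ to $\Gamma_{\varphi}$ stays in $\Omega$---this reduces to, and is pinned down by, the sector-containment computation above---and that the arcs used to close the contour really contribute nothing, for which one needs the uniform bound on $|L|$ over $\Omega$ and the exact set of $\arg s$ for which $\Re e\,(sz)$ remains positive along all of $\Gamma_{\varphi}$. Everything else is routine.
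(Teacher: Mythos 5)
Your argument is correct, but it takes a genuinely different route from the paper. The paper works entirely on the transform side: it integrates by parts $k+2$ times to bound the derivatives $F^{(k)}(it)$ on the imaginary axis by $C_{\varepsilon}\,k!\,(1+A+\varepsilon)^{k}|t|^{-k}$ (up to a factor depending on $t$), so that the Taylor series of $F$ at each $it$ converges on the disk of radius $|t|/(1+A)$, and the union of these disks with the half-plane $\Re e\, s>0$ is exactly the stated sector. You instead use \eqref{eqreg abserrorW-I} to continue $L$ itself holomorphically into the region $\bigcup_{x>B}D\bigl(x,x/(A+1)\bigr)$, which contains every truncated sector of half-opening $\varphi<\arcsin\bigl(1/(1+A)\bigr)=\pi/2-\theta$, and then rotate the contour; the identity $\arcsin\bigl(1/(1+A)\bigr)=\pi/2-\theta$ makes the two sectors coincide, and your computation $\inf_{x}|z-x|/x=|\sin(\arg z)|$ pins down the containment correctly. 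Your route has the small bonus of giving a uniform bound $|L(z)|\leq C(A+1)$ on the trumpet and, via the rotated representations, locally uniform bounds on $\mathcal{L}\{L;s\}$ throughout the sector, which the paper's disk-by-disk Taylor argument does not directly provide; the paper's route avoids any contour deformation and never needs to realize $L$ as a holomorphic function.

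One step you should make explicit: from the coefficient bounds alone it does not follow that the Taylor series of $L$ at $x$ represents $L$ (compare $e^{-1/x^{2}}$), so to know that the disk extensions agree with $L$ on the real axis, and hence glue consistently on overlapping disks, you need a Lagrange remainder estimate. It does go through here because the bound $|L^{(N+1)}(\xi)|\leq CA^{N+1}(N+1)!\,\xi^{-N-1}$ degrades only by the factor $\bigl((A+1)/ (A x)\bigr)^{N+1}$ on $[x-x/(A+1),x]$, which is precisely why the radius $x/(A+1)$ rather than $x/A$ is the right choice on the real axis as well; this is routine, but it is the one point where your write-up asserts more than it verifies.
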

\begin{proof} It is clear that $F(s)=\int^{\infty}_{0}e^{-sx}L(x)\mathrm{d}x$ converges for $\Re e \: s > 0$. Since the Laplace transform of a compactly supported function is entire, we may suppose that $L$ is supported on $[B,\infty)$. Since we can write $F(s) = e^{-sB}\int^{\infty}_{0} e^{-sx}L(x+B)\mathrm{d}x$, we may  w.l.o.g. assume $B = 0$ and replace $x^{-n}$ in the estimates for $L^{(n)}(x)$ by $(1+x)^{-n}$. We consider the $k$th derivative of $F$, namely $(-1)^{k}\int^{\infty}_{0} x^{k}e^{-sx}L(x)\mathrm{d}x$. We use integration by parts $k+2$ times to find
\begin{equation*}
 F^{(k)}(s) = (-1)^{k}\frac{k!L(0)}{s^{k+1}} + (-1)^{k}\frac{(k+1)!L'(0)}{s^{k+2}} + \frac{(-1)^{k}}{s^{k+2}} \int^{\infty}_{0} (L(x)x^{k})^{(k+2)}e^{-sx}\mathrm{d}x.
\end{equation*}
Because of the regularity assumption \eqref{eqreg abserrorW-I} the latter integral absolutely converges and hence $F$ admits a $C^{\infty}$-extension on the imaginary axis except possibly at the origin. The bounds \eqref{eqreg abserrorW-I} actually give for arbitrary $\varepsilon > 0$
\begin{align*}
 \left|F^{(k)}(it)\right| & \leq \frac{|L(0)|}{\left|t\right|} \frac{k!}{\left|t\right|^{k}} + \frac{|L'(0)|}{\left|t\right|^{2}} \frac{(k+1)!}{\left|t\right|^{k}} 
 +  \frac{1}{\left|t\right|^{k+2}} \int^{\infty}_{0} \sum_{j=2}^{k+2}{k+2 \choose j}  \left|L^{(j)}(x)\right|\frac{k!}{(j-2)!}x^{j-2}\mathrm{d}x
 \\
& \leq C' \frac{(1+|t|)(k+1)!}{\left|t\right|^{k+2}} + \frac{1}{\left|t\right|^{k+2}}\int^{\infty}_{0} \sum_{j = 2}^{k+2} CA^{j}\frac{(k+2)!k!}{(k+2-j)!(j-2)!}(1+x)^{-2} \mathrm{d}x
\\
& 
\leq C'\frac{(1+|t|)(k+1)!}{\left|t\right|^{k+2}} + \frac{A^2C\pi(k+2)!}{2\left|t\right|^{k+2}}\sum_{j = 0}^{k} A^{j} {k \choose j} \leq C_{\varepsilon}\frac{(1+|t|)}{|t|^{2}} \frac{k!(1+A+\varepsilon)^{k}}{\left|t\right|^{k}},
\end{align*}
where $C_{\varepsilon}$ only depends on $\varepsilon$ and $L$. Therefore, $F$ admits an analytic extension to the disk around $it$ with radius $\left|t\right|/(1+A)$. The union of all such disks and and the half-plane $\Re e\: s>0$ is precisely the sector in the statement of the lemma.
\end{proof}

\section{Absence of remainders in the Wiener-Ikehara theorem}
\label{Section 3 abserrorW-I}
 We are ready to show our main theorem, which basically tells us that no remainder of the form $O(x\rho(x))$ with $\rho(x)$ a function tending arbitrarily slowly to 0 could be expected in the Wiener-Ikehara theorem from just the hypothesis of analytic continuation of $G(s)-a/(s-1)$ to a half-plane containing $\Re e\: s\geq 1$. As customary, the $\Omega$ below stands for the Hardy-Littlewood symbol, namely, the negation of Landau's $o$ symbol. Our general reference for functional analysis is the text book \cite{treves}.
\begin{theorem}\label{th2 abserrorW-I} Let $\rho$ be a positive function,  let  $a> 0$, and $0 < \alpha < 1$. Suppose that every non-decreasing function $S$ on $[1,\infty)$, whose Mellin transform $G(s)$ is such that $G(s) - a/(s-1)$ admits an analytic extension to $\Re e \: s > \alpha$, satisfies 
$$S(x) = ax + O(x\rho(x)).$$
Then, one must necessarily have 
$$
\rho(x)=\Omega(1).
$$
\end{theorem}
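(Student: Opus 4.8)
The plan is to argue by contraposition: assuming $\rho(x)=o(1)$, I want to manufacture a single non-decreasing $S$ on $[1,\infty)$ with $G(s)-a/(s-1)$ analytic on $\Re e\, s>\alpha$ but $S(x)-ax\neq O(x\rho(x))$. First I normalise: replacing $\rho$ by a non-increasing majorant with the same limit, and then recording that $\sqrt{\rho}$ still majorises $\rho$ while $\sqrt{\rho}/\rho\to\infty$, I may assume $\rho$ is positive, non-increasing and $o(1)$, keeping in mind that the regularisation produced next will dominate $\rho$ by an unbounded factor. Applying Lemma~\ref{lemregularization abserrorW-I} to $t\mapsto\rho(e^{t})$ yields a smooth majorant $L$ obeying the derivative bounds \eqref{eqreg abserrorW-I}, and Lemma~\ref{lemanalyticextension} then shows that the Laplace transform of $L$, hence of the modulated blocks $x^{\beta}\cos(\gamma\log x)\,L(\log x)$ with $0\le\beta<\alpha$ and $\gamma\in\R$, extends analytically to a sector of half-opening $\pi-\theta>\pi/2$; since $\Re e(s-\beta)>\alpha-\beta>0$ on $\Re e\, s>\alpha$, that sector contains the whole half-plane $\Re e\, s>\alpha$. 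Consequently every such block — and, after scaling it small enough that $S'=a+E'>0$, every small combination of them — produces an admissible non-decreasing $S=ax+E$.

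The next step adapts Ganelius's functional-analytic device, but over a Fréchet rather than a Banach space: the hypothesis refers to analyticity in the \emph{open} half-plane $\Re e\, s>\alpha$, so there is no sup-norm available up to the boundary, and one is forced to topologise the relevant space of transforms by uniform convergence on the compact subsets of that half-plane (possibly reinforced with a few decay seminorms so that the remainder $E_{G}$ is recovered continuously from $G$). Let $X$ be the Fréchet space obtained in this way from the (completed) linear span of the admissible building blocks above, and for $G\in X$ write $E_{G}(x)=S_{G}(x)-ax$. Fixing a positive continuous regularisation $\rho^{*}$ of $\rho$ with $\rho^{*}(x)\to 0$, put $q(G)=\sup_{x\ge1}|E_{G}(x)|/(x\rho^{*}(x))$. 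Then $q$ is lower semicontinuous, positively homogeneous and subadditive on $X$, because $G\mapsto E_{G}(x)$ is continuous for every $x$; and the standing hypothesis makes $q$ finite on the open set $U\subset X$ of those $G$ for which $S_{G}$ is non-decreasing.

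Now comes the main point, a Baire category argument: $U$, being open in a Fréchet space, is a Baire space, and $q$ is lower semicontinuous, so the finiteness of $q$ on $U$ forces the existence of a neighbourhood $W$ of $0$ in $X$ and a constant $C$ with $|E_{G}(x)|\le Cx\rho^{*}(x)$ for all $G\in W$ and all $x\ge1$ (the passage from ``bounded on a translate of $W$'' to ``bounded on $W$'' uses that $q$ is subadditive and even, and that the translates involved remain inside the admissible set $U$). It remains to contradict this uniform estimate. Since $W$ constrains only finitely many of the compact-set seminorms of $X$ — seminorms that do not control how large $E_{G}$ can be — one scales and superposes the building blocks to produce $G\in W$ for which $|E_{G}(x_{0})|>Cx_{0}\rho^{*}(x_{0})$ at some large $x_{0}$, exploiting that $\rho^{*}(x_{0})\to 0$ and that $L$ was arranged to beat $\rho$ by an unbounded factor.

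The delicate part, and the reason the functional analysis earns its keep, is precisely this final construction. The Tauberian condition $E'\ge -a$ caps the height of any single excursion of $E$ by (a constant times) its width, while the analytic-continuation condition limits the aggregate size of such excursions; a direct estimate then keeps the remainder of any explicitly written-down perturbation well below $x\rho(x)$ whenever $\rho$ decays slowly. What the Baire argument provides is that one need not exhibit the subtle, cancellation-laden superposition that overcomes this obstruction: it is enough to know that \emph{some} member of every neighbourhood of $0$ in $X$ violates the putative uniform bound, and this follows from the abundance of admissible building blocks furnished by Lemmas~\ref{lemregularization abserrorW-I} and~\ref{lemanalyticextension}. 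The remaining technical work — choosing $X$ and $\rho^{*}$ so that $X$ is genuinely complete, $E_{G}$ depends continuously on $G$, and the non-linear Tauberian constraint enters only through the open set $U$ — is routine but must be done carefully.
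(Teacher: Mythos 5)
Your overall architecture (contraposition, the regularization Lemma~\ref{lemregularization abserrorW-I}, the analytic-continuation Lemma~\ref{lemanalyticextension}, and a category/open-mapping argument on a Fr\'echet space yielding an a priori inequality) matches the paper's, but the proof has a genuine gap exactly where the real work lies: you never produce the elements that violate the uniform bound, and you explicitly claim the Baire argument makes that construction unnecessary. It does not. The functional-analytic step only delivers an inequality of the form $\sup_x|E_G(x)|\delta(x)\le C\,(\text{finitely many seminorms of }G)$; to reach a contradiction one must exhibit a family with uniformly bounded seminorms whose remainders are pointwise as large as $x L(\log x)\gg x\rho(x)$. Moreover, the building blocks you propose, $x^{\beta}\cos(\gamma\log x)L(\log x)$ with $0\le\beta<\alpha$, cannot do this: they (and any finite superposition of them) are $O(x^{\alpha})$, hence never in conflict with $S(x)=ax+O(x\rho(x))$ when $\rho$ decays slowly, e.g.\ $\rho(x)=1/\log x$. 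The blocks that do work are of size $\asymp xL(\log x)$, such as the paper's $T_{b}(x)=\int_{1}^{x}L(\log u)\cos(b\log u)\,\mathrm{d}u$; but then their Mellin transforms are \emph{not} analytic on the whole half-plane $\Re e\: s>\alpha$ (the shifted Laplace transforms $\mathcal{L}\{L;s-1\pm ib\}$ are only analytic in a sector, hence only on a strip $\left|\Im m\: s\right|\lesssim b$). The paper resolves this tension by noting that the open-mapping inequality involves only one norm $\left\|\cdot\right\|_{N,1}$, extending it to the completion in that norm, showing by an approximation argument that functions whose Mellin transform is analytic merely near the strip $\{\Re e\: s\ge\alpha,\ \left|\Im m\: s\right|\le N\}$ lie in that completion, taking $b\in[M,M+1]$ with $M$ large so that $T_b$ qualifies and $\left\|T_{b}\right\|_{N,1}$ is uniformly bounded, and then choosing $b$ with $\sin(b\log y)=1$ to get $\sup_{b}T_{b}(y)\ge C_{M}\,yL(\log y)$. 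None of this quantitative mechanism appears in your proposal, and it is not avoidable.

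There are also unverified claims in your category argument itself. The set $U$ of $G$ with $S_{G}$ non-decreasing is asserted to be open in your topology (uniform convergence of $G$ on compacta plus unspecified ``decay seminorms''); that is doubtful, and without it the Baire/lsc argument and the passage from a translate of $W$ back to a neighbourhood of $0$ do not go through as stated. Likewise the continuity of $G\mapsto E_{G}(x)$ and the completeness of $X$ are postulated rather than arranged. The paper sidesteps the non-linear constraint entirely: since the hypothesis may be applied with every $a>0$ (by scaling $S$), any Lipschitz $T$ is covered after adding $Kx$ with $K$ its Lipschitz constant, so the space of admissible remainders is linear, the two explicit Fr\'echet norms $\left\|\cdot\right\|_{n,1}$ and $\left\|\cdot\right\|_{n,2}$ (which include $\operatorname*{ess\:sup}_x|T'(x)|$, making evaluation and completeness transparent) define spaces with the same elements, and the open mapping theorem applies directly. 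I recommend replacing your $U$-based Baire step by this linearization, and replacing your building blocks by the modulated primitives $T_{b}$ together with the completion/approximation argument; as it stands, the decisive lower-bound construction is missing.
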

\begin{proof} Since $a> 0$, we may actually assume that the ``Tauberian theorem'' hypothesis holds for every possible constant $a> 0$.  Assume that $\rho(x) \rightarrow 0$. Then, one can choose a positive non-increasing function $\ell (x)\rightarrow 0$ such that $\ell(\log x)/\rho(x) \rightarrow \infty$. We now apply Lemma \ref{lemregularization abserrorW-I} to $\ell$ to get a smooth function $L$ with $\ell(x)\ll L(x) \rightarrow 0$ and the estimates (\ref{eqreg abserrorW-I}) on its derivatives.  We set $x\rho(x)=1/\delta(x)$. If we manage to show that $\delta(x)=O(1/xL(\log x))$, then one obtains a contradiction with $\ell(\log x)/\rho(x) \rightarrow \infty$ and hence $\rho(x) \nrightarrow 0$. We thus proceed to show that $\delta(x)=O(1/xL(\log x))$. Obviously, we may additionally assume that $L$ satisfies 
\begin{equation}
\label{abserrorW-I eq 3} L(x)\gg x^{-1/2}.
\end{equation}

 We are going to define two Fr\'echet spaces. The first one consists of all Lipschitz continuous functions on $[1,\infty)$ such that their Mellin transforms can be analytically continued to $\Re e\: s> \alpha$ and continuously extended to the closed half-plane $\Re e\: s\geq \alpha$. We topologize it via the countable family of norms \[
 \left\|T\right\|_{n,1} = \operatorname*{ess\:sup}_{x}|T'(x)| + \sup_{\Re e \: s \geq \alpha,\: \left|\Im m \: s\right| \leq n} \left|G_{T}(s)\right|,
\]
where $G_{T}$ stands for (the analytic continuation of) the Mellin transform of $T$. The second Fr\'echet space is defined via the norms
\[
 \left\|T\right\|_{n,2} = \sup_{x} \left|T(x)\delta(x)\right| + \left\|T\right\|_{n,1}.
\]
The hypothesis in the theorem ensures that the two spaces have the same elements. The verification of completeness with respect to this family of norms is standard and we therefore omit it. Obviously the inclusion mapping from the second space into the first one is continuous. Hence, by the open mapping theorem, the inclusion mapping from the first space into the second one is also continuous. Therefore, there exist sufficiently large $N$ and $C$ such that
\begin{equation}
\label{eq 4 abserrorW-I}
  \sup_{x} \left|T(x)\delta(x)\right| \leq C \left\|T\right\|_{N,1}
\end{equation}
for all $T$ in our Fr\'{e}chet space. This inequality extends to the completion of the Fr\'echet space with regard to the norm $\left\|\:\cdot\:\right\|_{N,1}$. We note that any function $T$ for which $T'(x) = o(1)$, $T(1)=0$,  and whose Mellin transform has analytic continuation in a neighborhood of $\{s : \Re e \: s \geq \alpha, \left|\Im m \: s\right| \leq N\}$ is in that completion. Indeed, let $\varphi \in \mathcal{S}(\mathbb{R})$ be such that $\varphi(0)=1$ and its Fourier transform has compact support; then $\tilde{T}_{\lambda}(x) := \int^{x}_{1} T'(u) \varphi(\lambda \log u)\mathrm{d}u$ converges to $T$ as $\lambda \rightarrow 0^{+}$ in the norm $\left\|\:\cdot\:\right\|_{N,1}$. We now consider
\[
 T_{b}(x) := \int^{x}_{1} L(\log u) \cos(b \log u) \mathrm{d}u.
\]
Obviously the best Lipschitz constant for $T_{b}$ is bounded by the supremum of $L$. Its Mellin transform is 
\[
 G_{b}(s) = \frac{1}{2s}\left(\mathcal{L}\{L;s-1+ib\} + \mathcal{L}\{L;s-1-ib\}\right).
\]
Because of Lemma \ref{lemanalyticextension} it follows that $G_{b}$ is analytic in $\{s : \Re e \: s \geq \alpha, \left|\Im m \: s\right| \leq N\}$ for all sufficiently large $b$, let us say for every $b > M$. Hence, the norm $\left\|T_{b}\right\|_{N,1}$ is uniformly bounded in $b$ for $b \in [M,M+1]$. A quick calculation shows for $b \in [M,M+1]$
\[
 T_{b}(x) :=  \frac{x L(\log x)}{b^{2}+ 1} \left(\cos(b\log x)+ b\sin(b\log x)\right)+ O\left(\frac{x}{\log x}\right),
\]
where the $O$-constant is independent of $b$. For each $y$ large enough there is $b \in [M,M+1]$ such that $\sin(b\log y) = 1$. Therefore, for $y$ sufficiently large, taking also \eqref{abserrorW-I eq 3} into account, we have 
\[
 \sup_{b \in [M,M+1]} T_{b}(y) \geq \inf_{b \in [M,M+1]} \frac{byL(\log y)}{b^{2} + 1} +O\left(\frac{y}{\log y}\right) \geq C_{M}yL(\log y),
\]
with $C_{M}$ a positive constant. Consequently, for all sufficiently large $y$, the inequality \eqref{eq 4 abserrorW-I} yields

\begin{align*}
 \delta(y)&  \leq \sup_{b \in [M,M+1]} \frac{ T_{b}(y)\delta(y)}{C_{M}yL(\log y)} \leq \sup_{b \in [M,M+1]} \sup_{x} \frac{ \left|T_{b}(x)\delta(x)\right|}{C_{M}yL(\log y)}\\
& \leq \frac{C}{C_{M}yL(\log y)} \sup_{b \in [M,M+1]}\left\|T_{b}\right\|_{N,1} = O\left(\frac{1}{yL(\log y)}\right).
\end{align*}

\end{proof}

\section{The Ingham-Karamata theorem}
\label{ingham section abserrorterm}
We start by stating the Ingham-Karamata theorem. A real-valued 
function $\tau$ is called \emph{slowly decreasing} \cite{korevaarbook} if for each $\varepsilon > 0$ there is $\delta > 0$ such that
\[
 \liminf_{x\to\infty}\inf_{h\in[0,\delta]}(\tau(x+h) - \tau(x)) > - \varepsilon.
\]
\begin{theorem}
\label{Inghamth abserrorWI} Let $\tau \in L^{1}_{loc}[0,\infty)$ be slowly decreasing and have convergent Laplace transform 
$$
\mathcal{L}\{\tau;s\}=\int_{0}^{\infty}\tau(x)e^{-sx}\mathrm{d}x \quad \mbox{ for }\ \Re e \: s > 0.
$$
Suppose that $\mathcal{L}\{\tau;s\}$ has a continuous extension to the imaginary axis. Then,
$$
\tau(x)=o(1).
$$
\end{theorem}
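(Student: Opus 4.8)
I would argue in two stages: first reduce the conclusion, by a soft Tauberian step, to the assertion that a smoothed version of $\tau$ tends to $0$ at infinity, and then establish the latter by a Fourier-analytic argument of Ingham type. For the soft step, note that if $\tau$ is slowly decreasing and $(\tau*K_\lambda)(x)\to0$ as $x\to\infty$ for a family of non-negative kernels $K_\lambda$ of total mass $1$ that concentrate at the origin as $\lambda\to\infty$, then $\tau(x)=o(1)$. Indeed, boundedness of $\tau*K_\lambda$ together with slow decrease already forces $\tau$ to be bounded — a spike of $\tau$ of either sign persists, by slow decrease, over a fixed length and would make $\tau*K_\lambda$ large — and then, if $\tau(x_n)\geq\varepsilon$ along some $x_n\to\infty$, slow decrease gives $\tau\geq\varepsilon/2$ on $[x_n,x_n+\delta]$ for all large $n$, so $(\tau*K_\lambda)(x_n+\delta/2)\geq(\varepsilon/2)\int_{|y|\leq\delta/2}K_\lambda(y)\,\mathrm{d}y-\|\tau\|_\infty\int_{|y|>\delta/2}K_\lambda(y)\,\mathrm{d}y$, which exceeds $\varepsilon/3$ for $\lambda$ large, a contradiction; the case $\liminf_{x\to\infty}\tau(x)<0$ is symmetric. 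Thus it suffices to prove $(\tau*K_\lambda)(x)\to0$ for each $\lambda$.

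For this, write $g(s)=\mathcal{L}\{\tau;s\}$, analytic on $\Re e\: s>0$ and, by hypothesis, extending continuously to $\Re e\: s=0$; denote the boundary values by $g(it)$. Fix $\phi\in\mathcal{S}(\R)$ with compact support and with $\check\phi\geq0$, $\int_{\R}\check\phi=1$, where $\check\phi(y)=\int_{\R}\phi(t)e^{iyt}\,\mathrm{d}t$ is a Schwartz function. For $\sigma>0$ put $I_\sigma(x)=\int_{\R}g(\sigma+it)\,\phi(t)\,e^{itx}\,\mathrm{d}t$. Inserting $g(\sigma+it)=\int_0^\infty\tau(u)e^{-(\sigma+it)u}\,\mathrm{d}u$ and interchanging the integrations yields $I_\sigma(x)=\int_0^\infty\tau(u)e^{-\sigma u}\,\check\phi(x-u)\,\mathrm{d}u$. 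Letting $\sigma\to0^+$: on the left, dominated convergence — using that $g$ is continuous, hence bounded, on the compact support of $\phi$ — gives $I_\sigma(x)\to\int_{\R}g(it)\phi(t)e^{itx}\,\mathrm{d}t$, and since $t\mapsto g(it)\phi(t)$ lies in $L^{1}(\R)$, the Riemann--Lebesgue lemma makes the latter tend to $0$ as $x\to\infty$; on the right, one obtains in the limit $\int_0^\infty\tau(u)\check\phi(x-u)\,\mathrm{d}u$. Hence $\int_0^\infty\tau(u)\check\phi(x-u)\,\mathrm{d}u\to0$ as $x\to\infty$. Taking for $K_\lambda$ the normalized dilates of $\check\phi$ (their frequency supports grow but stay compact, so the Riemann--Lebesgue step still applies) gives $(\tau*K_\lambda)(x)\to0$ for each $\lambda$, and the soft step concludes.

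The step I expect to be the main obstacle is the passage to the limit $\sigma\to0^+$ on the right-hand side, $\int_0^\infty\tau(u)e^{-\sigma u}\check\phi(x-u)\,\mathrm{d}u\to\int_0^\infty\tau(u)\check\phi(x-u)\,\mathrm{d}u$, together with the companion point of justifying the Fubini interchange — both delicate because $\tau$ carries no a priori boundedness and the Laplace integral converges only conditionally on $\Re e\: s>0$. The way around it is to express each such integral through the bounded tail $\rho_{\sigma_0}(x)=\int_x^\infty\tau(t)e^{-\sigma_0 t}\,\mathrm{d}t\to0$ at a fixed small $\sigma_0>0$ (equivalently, through the primitive of $\tau(t)e^{-\sigma_0 t}$), which is exactly where the conditional convergence of $\mathcal{L}\{\tau;s\}$ is used; one can also argue beforehand that $\tau$ is bounded, but that already requires the continuity of $g$ along the \emph{whole} imaginary axis. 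Finally, it is worth emphasizing that the familiar Newman contour argument is not directly available here, since the hypothesis grants only continuity — not analytic continuation — of $g$ across $\Re e\: s=0$; this is precisely why one must keep the contour on the line and rely on the compact support of $\phi$ and the Riemann--Lebesgue lemma in place of the decay of $e^{zx}$ in a left half-plane.
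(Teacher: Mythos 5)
First, a point of reference: the paper itself does not prove this statement --- it is the classical Ingham--Karamata theorem, quoted from the literature (Korevaar's book \cite{korevaarbook}), so your attempt can only be measured against the standard proof there. Your route is exactly that classical one (smoothing against kernels whose Fourier transforms have compact support, dominated convergence up to the boundary, Riemann--Lebesgue, then the slow-decrease Tauberian step with concentrating kernels), and for \emph{bounded} $\tau$ it is essentially complete and correct.

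The genuine gap is the absence of an a priori bound on $\tau$: the hypotheses give only $\tau\in L^{1}_{loc}$ and slow decrease, and two of your steps break down without boundedness. (i) The boundedness argument inside your ``soft step'' is circular: to show that a spike of $\tau$ forces $\tau*K_\lambda$ to be large you must control $\int_{|y|>\delta/2}\tau(x-y)K_\lambda(y)\,\mathrm{d}y$, which requires the very bound on $\tau$ you are trying to establish; slow decrease gives no global lower bound (e.g.\ $\tau(x)=-x$ is slowly decreasing), and since $K_\lambda$ (a dilate of $\check{\phi}$) is not compactly supported, the convolution need not even converge without growth control. (ii) From slow decrease plus convergence of the Laplace integral one can extract $|\tau(u)|=O_{\varepsilon}(e^{\varepsilon u})$ for each $\varepsilon>0$, which legitimizes the Fubini step for fixed $\sigma>0$, but it provides no dominating function for the limit $\sigma\to0^{+}$, and the limit integral $\int_{0}^{\infty}\tau(u)\check{\phi}(x-u)\,\mathrm{d}u$ need not even converge. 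Your proposed rescue --- integrating by parts against the tail $\rho_{\sigma_0}$ at a \emph{fixed} $\sigma_0>0$ --- does not work: writing $\tau(u)e^{-\sigma u}=-\rho_{\sigma_0}'(u)e^{(\sigma_0-\sigma)u}$ introduces the factor $e^{(\sigma_0-\sigma)u}$, and $\check{\phi}$, being the Fourier transform of a compactly supported function, cannot decay exponentially (Paley--Wiener), so neither the boundary term nor the new integral is controlled as $\sigma\to0^{+}$. The standard repair is to prove $\tau(x)=O(1)$ first, as a separate lemma (this is how the classical treatments proceed, and in sharper form it is done in the authors' paper \cite{d-vCT}, where boundedness is deduced from boundary regularity near $s=0$ alone); once that lemma is in place, your argument goes through as written.
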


We also have the ensuing result on the absence of remainders in the Ingham-Karamata theorem.

\begin{theorem}\label{th3 abserrorW-I} Let $\eta$ be a positive function and let $-1<\alpha < 0$. Suppose that every  slowly decreasing function $\tau\in L^{1}_{loc}[0,\infty)$, whose Laplace transform converges on $\Re e\:s>0$ and has an analytic continuation to the half-plane $\Re e \: s > \alpha$, satisfies 
$$\tau(x) = O(\eta(x)).$$
Then, we necessarily have 
$$
\eta(x)=\Omega(1).
$$
\end{theorem}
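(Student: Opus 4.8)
The plan is to reduce Theorem \ref{th3 abserrorW-I} to the already-established Theorem \ref{th2 abserrorW-I} by a standard change of variables linking the Laplace and Mellin transform settings. Suppose, for contradiction, that $\eta(x)=o(1)$. First I would translate the hypothesis of Theorem \ref{th3 abserrorW-I} into a statement about non-decreasing functions on $[1,\infty)$: given a non-decreasing $S$ on $[1,\infty)$ whose Mellin transform $G(s)$ satisfies that $G(s)-a/(s-1)$ extends analytically to $\Re e\: s>\alpha+1$, set $\tau(x):=S(e^{x})e^{-x}-a$. Then $\tau\in L^{1}_{loc}[0,\infty)$ and a direct computation gives
\[
\mathcal{L}\{\tau;s\}=\int_{0}^{\infty}\bigl(S(e^{x})e^{-x}-a\bigr)e^{-sx}\,\mathrm{d}x=G(s+1)-\frac{a}{s},
\]
which converges for $\Re e\: s>0$ and, by hypothesis on $G$, continues analytically to $\Re e\: s>\alpha$ (note $-1<\alpha<0$ corresponds precisely to $0<\alpha+1<1$).

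The second step is to check that $\tau$ is slowly decreasing. Since $S$ is non-decreasing, for $h\in[0,\delta]$ we have $S(e^{x+h})\geq S(e^{x})$, so $\tau(x+h)-\tau(x)=S(e^{x+h})e^{-x-h}-S(e^{x})e^{-x}\geq S(e^{x})e^{-x}(e^{-h}-1)\geq (a+\tau(x))(e^{-\delta}-1)$; provided $\tau$ is bounded below along the relevant scales (which one gets for free once one knows $\tau(x)=O(\eta(x))=o(1)$, or more carefully by a preliminary bootstrapping — actually the slow decrease only needs to be established a priori, so one uses that $S(e^x)e^{-x}$ is nonnegative, hence $\tau(x)\geq -a$, giving $\tau(x+h)-\tau(x)\geq -a\cdot|e^{-\delta}-1|$, which can be made $>-\varepsilon$ by choosing $\delta$ small). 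Thus $\tau$ satisfies all the hypotheses of Theorem \ref{th3 abserrorW-I}, so by the assumed conclusion $\tau(x)=O(\eta(x))$, i.e.\ $S(e^{x})e^{-x}-a=O(\eta(x))$, equivalently $S(x)=ax+O(x\eta(\log x))$.

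The final step is to feed this into Theorem \ref{th2 abserrorW-I} with the choice $\rho(x):=\eta(\log x)$. We have just shown that every non-decreasing $S$ on $[1,\infty)$ whose Mellin transform satisfies the analytic continuation hypothesis to $\Re e\: s>\alpha+1$ obeys $S(x)=ax+O(x\rho(x))$. Since $0<\alpha+1<1$ and $a>0$, Theorem \ref{th2 abserrorW-I} applies and forces $\rho(x)=\Omega(1)$, i.e.\ $\eta(\log x)=\Omega(1)$, i.e.\ $\eta(x)=\Omega(1)$, contradicting $\eta(x)=o(1)$. Hence $\eta(x)=\Omega(1)$, as claimed.

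I expect the main (minor) obstacle to be the careful verification of the slow decrease of $\tau$: one must be sure that the Tauberian side condition is genuinely transported under the exponential substitution without secretly assuming the conclusion. The cleanest route is the one indicated above, exploiting only the a priori nonnegativity of $S$ (hence $\tau\geq -a$) to obtain slow decrease with an explicit $\delta$ depending on $\varepsilon$; everything else is routine bookkeeping with the transform identity and the dictionary $s\leftrightarrow s+1$, $x\leftrightarrow e^{x}$.
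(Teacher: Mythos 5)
Your overall reduction is exactly the paper's: set $\rho(x)=\eta(\log x)$, pass from a non-decreasing $S$ to $\tau(x)=e^{-x}S(e^{x})-a$, use $\mathcal{L}\{\tau;s\}=G(s+1)-a/s$ to transport the analytic continuation hypothesis from $\Re e\: s>\alpha+1$ to $\Re e\: s>\alpha$, and then invoke Theorem \ref{th2 abserrorW-I}. That part is correct and is precisely how the paper argues.

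However, your verification that $\tau$ is slowly decreasing has a genuine error. From monotonicity of $S$ you correctly get $\tau(x+h)-\tau(x)\geq\bigl(a+\tau(x)\bigr)\bigl(e^{-h}-1\bigr)$, but since $e^{-h}-1\leq 0$ and $a+\tau(x)=e^{-x}S(e^{x})\geq 0$, bounding this from below by $-\varepsilon$ requires an \emph{upper} bound on $e^{-x}S(e^{x})$, not the lower bound $\tau(x)\geq -a$ that you use; the step ``$\tau(x)\geq -a$ gives $\tau(x+h)-\tau(x)\geq -a\,|e^{-\delta}-1|$'' reverses the direction of the inequality. And the needed upper bound is not free: monotonicity plus convergence of the Mellin transform for $\Re e\: s>1$ only yields elementary bounds like $S(x)=O(x\log x)$, and a non-decreasing $S$ with $e^{-x}S(e^{x})$ unbounded (sparse large jumps) produces a $\tau$ that is \emph{not} slowly decreasing — so the claim cannot be rescued from monotonicity alone, and using the conclusion $\tau=O(\eta)$ would be circular, as you note. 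The paper's fix is simple and worth adopting: since $G(s)-a/(s-1)$ is analytic on $\Re e\: s>\alpha+1\supseteq\{\Re e\: s\geq 1\}$, the classical Wiener-Ikehara theorem (Theorem 1, an independent prior result) already gives $S(x)=ax+o(x)$, i.e.\ $\tau(x)=o(1)$, and a function tending to $0$ is trivially slowly decreasing. With that one substitution your argument is complete and coincides with the paper's proof.
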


\begin{proof} We reduce the problem to Theorem \ref{th2 abserrorW-I}. So set $\rho(x)=\eta(\log x)$ and we are going to show that $\rho(x)=\Omega(1)$. Suppose that $S$ is non-decreasing on $[1,\infty)$ such that its Mellin transform $G(s)$ converges on $\Re e\: s>1$ and  
$$
G(s)-\frac{1}{s-1}
$$
analytically extends to $\Re e\: s>1+\alpha$. By the Wiener-Ikehara theorem $\tau(x)=e^{-x}S(e^{x})-1=o(1)$ and in particular it is slowly decreasing. Its Laplace transform
$$
\mathcal{L}\{\tau;s\}= G(s+1)-\frac{1}{s}
$$
is analytic on $\Re e \: s>\alpha$ and thus $\tau(x)=O(\eta(x))$, or equivalently, $S(x)=x+O(x\rho(x))$. Since $S$ was arbitrary, Theorem \ref{th2 abserrorW-I} gives at once $\rho(x)=\Omega(1)$. The proof is complete.
\end{proof}


\begin{thebibliography}{99}


\bibitem{aramaki} J. Aramaki, \emph{An extension of the Ikehara Tauberian theorem and its application,} Acta Math. Hungar. \textbf{71} (1996), 297--326.

\bibitem{B-B-T2016} C.~J.~K.~Batty, A.~Borichev, Y.~Tomilov, \emph{$L^p$-tauberian theorems and $L^p$-rates for energy decay,} J. Funct. Anal. \textbf{270} (2016), 1153--1201.

\bibitem{binghambook} N.~H.~Bingham, C.~M.~Goldie, J.~L.~Teugels, \textit{Regular variation,} Encyclopedia of Mathematics and its Applications,  27, Cambridge University Press, Cambridge, 1989.

\bibitem{Chill-Seifert2016} R.~Chill, D.~Seifert, \emph{Quantified versions of Ingham's theorem,} Bull. Lond. Math. Soc. \textbf{48} (2016), 519--532.

\bibitem{C-Qbook} D.~Choimet, H.~Queff\'{e}lec,  \emph{Twelve landmarks of twentieth-century analysis,} Cambridge University Press, New York, 2015.


\bibitem{d-vW-I2016}  G.~Debruyne, J.~Vindas, \emph{Generalization of the Wiener-Ikehara theorem,} Illinois J. Math. \textbf{60} (2016), 613--624.

\bibitem{d-vOptIngham} G.~Debruyne, J.~Vindas, \emph{Optimal Tauberian constant in Ingham's theorem for Laplace transforms,} Israel J. Math., in press, DOI 10.1007/s11856-018-1758-1.

\bibitem{d-vCT}G.~Debruyne, J.~Vindas, \emph{Complex Tauberian theorems for Laplace transforms with local pseudofunction boundary behavior,} J. Anal. Math., to appear (preprint: arXiv:1604.05069).

\bibitem{delange1954}H.~Delange, \emph{G\'{e}n\'{e}ralisation du th\'{e}or\`{e}me de Ikehara,} Ann. Sci. Ecole Norm. Sup. \textbf{71} (1954), 213--242. 

\bibitem{ganelius} T.~Ganelius, \emph{Tauberian remainder theorems,} Lecture Notes in Mathematics, 232, Springer-Verlag, Berlin-New York, 1971.

\bibitem{grahamvaaler} S.~W.~Graham, J.~D.~Vaaler, \emph{A class of extremal functions for the Fourier transform,} Trans. Amer. Math. Soc. \textbf{265} (1981), 283--302.


\bibitem{korevaarbook} J.~Korevaar, \emph{Tauberian theory. A century of developments}, Grundlehren der Mathematischen Wissenschaften, 329, Springer-Verlag, Berlin, 2004.

\bibitem{korevaar2006}  J.~Korevaar,  \emph{Distributional Wiener-Ikehara  theorem and twin  primes,} Indag. Math. (N.S.) \textbf{16} (2005), 37--49.

\bibitem{L-Zbook} P.~D.~Lax, L.~Zalcman, \emph{Complex proofs of real theorems,}
University Lecture Series, 58, American Mathematical Society, Providence, RI, 2012.

\bibitem{muger} M.~M\"uger, \emph{On Ikehara type Tauberian theorems with $O(x^{\gamma})$ remainders}, Abh. Math. Semin. Univ. Hambg., in press, doi:10.1007/s12188-017-0187-0.

\bibitem{revesz-roton} Sz.~Gy.~R\'{e}v\'{e}sz, A.~de~Roton, \emph{Generalization of the effective Wiener-Ikehara theorem,} Int. J. Number Theory \textbf{9} (2013), 2091--2128. 

\bibitem{Seifert2015} D.~Seifert, \emph{A quantified Tauberian theorem for sequences,} Studia Math. \textbf{227} (2015), 183--192.

\bibitem{Tenenbaumbook} G.~Tenenbaum, \emph{Introduction to analytic and probabilistic number theory,} American Mathematical Society, Providence, RI, 2015.

\bibitem {treves} F.~Tr\`{e}ves, \emph{Topological vector spaces,
distributions and kernels}, Academic Press, New York, 1967.
	
	
\bibitem{zhang2014} W.-B.~Zhang, \emph{Wiener-Ikehara theorems and the Beurling generalized primes,} Monatsh. Math. 174 (2014), 627--652.	


\end{thebibliography}
\end{document}